\title{A remark on nonlocal Neumann conditions for the fractional Laplacian}
\thanks{\textit{2010 Mathematics Subject Classification}: 35S15, 35J99, 47G20}
\thanks{\textit{Keywords}: fractional Laplacian, nonlocal normal derivative,
nonlocal Neumann conditions, regional Laplacian}
\begin{document}

\maketitle

\centerline{\scshape nicola abatangelo}
\medskip
{\footnotesize
 \centerline{D\'{e}partement de Math\'{e}matique,}
 \centerline{Universit\'e Libre de Bruxelles CP 214}
 \centerline{boulevard du Triomphe}
 \centerline{1050 Bruxelles, Belgium}
}

\begin{abstract}
We show how nonlocal boundary conditions of Robin type
can be encoded in the pointwise expression of the fractional operator.
Notably, the fractional Laplacian 
of functions satisfying
homogeneous nonlocal Neumann conditions
can be expressed as a regional 
operator with a kernel
having logarithmic behaviour at the boundary.
\end{abstract}


\section{Introduction}

The purpose of this short note is to put in evidence a special
feature of the fractional Laplacian when coupled with nonlocal Robin
boundary conditions.

By \textit{fractional Laplacian} we mean the nonlocal operator of order $2s\in(0,2)$
\begin{align}\label{ds}
\Ds u(x):=c_{n,s}\;\pv\int_{\R^n}\frac{u(x)-u(y)}{{|x-y|}^{n+2s}}\;dy
:=c_{n,s}\:\lim_{\eps\downarrow 0}\int_{\R^n\setminus B_\eps(x)}\frac{u(x)-u(y)}{{|x-y|}^{n+2s}}\;dy,
\end{align}
where $c_{n,s}$ is a positive normalizing constant.
We refer to Valdinoci and the author \cite{av} 
for an introduction to the basic traits of this operator,
with particular emphasis on the differences with the classical elliptic theory for the Laplacian,
and other possible notions of fractional Laplacian. We underline how the definition in~\eqref{ds}
makes sense\footnote{More precisely, the concerned function~$u$
must enjoy some regularity and some growth control at infinity,
in order to have the integral finite:
we skip here on these important details and we refer the 
interested reader to~\cite{av}.} only for functions defined in all of~$\R^n$: this yields an associated
boundary value problem on some domain $\Omega\subset\R^n$ which looks like 
\begin{align*}
\left\lbrace\begin{aligned}
\Ds u &= f & & \text{in }\Omega \\
u &= g & &  \text{in }\R^n\setminus\Omega.
\end{aligned}\right.
\end{align*}
In this problem, the function $g:\R^n\setminus\Omega\to\R$ accounts as a boundary condition of Dirichlet sort.
A Neumann boundary condition has been proposed by Dipierro, Ros-Oton, and Valdinoci~\cite{MR3651008},
by means of some \textit{nonlocal normal derivative} (for which we keep the original notation from the authors)
\begin{equation}\label{neu}
\widetilde\NN_s u(x):=\left(\int_\Omega\frac{dy}{{|x-y|}^{n+2s}}\right)^{-1}\int _\Omega\frac{u(x)-u(y)}{{|x-y|}^{n+2s}}\;dy,
\qquad x\in\R^n\setminus\Omega,
\end{equation}
in its normalized version, see~\cite[equation (1.5)]{MR3651008}.
With~\eqref{neu}, it is possible to study the elliptic boundary value problem
\begin{align*}
\left\lbrace\begin{aligned}
\Ds u &= f & & \text{in }\Omega \\
\widetilde\NN_su &= g & &  \text{in }\R^n\setminus\Omega
\end{aligned}\right.
\end{align*}
or also the parabolic one
\begin{align}\label{prob:neu-par}
\left\lbrace\begin{aligned}
\partial_tu+\Ds u &= 0 & & \text{in }\Omega\times(0,\infty) \\
\widetilde\NN_su &= 0 & &  \text{in }\big(\R^n\setminus\Omega\big)\times(0,\infty) \\
u &= u_0 & & \text{on }\Omega\times\{0\}.
\end{aligned}\right.
\end{align}

We show how the fractional Laplacian of a function satisfying homogeneous Neumann 
conditions $\widetilde\NN_su=0$ in $\R^n\setminus\Omega$ can be reformulated 
as a regional type operator, \textit{i.e.}, under the form
\begin{align}\label{regional-gen}
\Ds u(x)=c_{n,s}\:\pv\int_\Omega\big(u(x)-u(y)\big)\;K(x,y)\;dy,
\qquad x\in\Omega,
\end{align}
for some suitable measurable kernel $K(x,y):\Omega\times\Omega\to\R$.
See equations~\eqref{eq:main} and~\eqref{eq:main2} below for the detailed expression of the kernel. 

\subsection{Notations}
In the following, we will use these notations without further notice.
Fixed a nonempty set $\Sigma\subset\R^n$, $\dist(\cdot,\Sigma):\R^n\to[0,+\infty)$
stands for the distance function $\dist(x,\Sigma)=
\inf\{|x-y|:y\in\Sigma\}$. When $\Sigma=\partial\Omega$,
where $\Omega$ is the reference domain in \eqref{neu},
we simply write $d=\dist(\cdot,\partial\Omega)$.
For a measurable function $f:\R^n\to\R$,
we write $\supp(f):=\overline{\{f\neq0\}}$.

The binary operations $\wedge$ and $\vee$
will respectively denote the ``$\min$'' and ``$\max$'' operations 
between real numbers:
\begin{align*}
a\wedge b=\min\{a,b\},\quad a\vee b=\max\{a,b\}\qquad a,b\in\R.
\end{align*}
In our computations, we will also make use of the particular choice
$K(x,y)={|x-y|}^{-n-2s}$ in~\eqref{regional-gen}, yielding the usually called 
\textit{regional fractional Laplacian}
\begin{align}\label{regional}
\Ds_\Omega u(x):=c_{n,s}\:\pv\int_\Omega\frac{u(x)-u(y)}{{|x-y|}^{n+2s}}\;dy,
\qquad x\in\Omega.
\end{align}
Finally, for a measurable $\beta:\R^n\to[0,1]$, let
\begin{align}\label{k}
k_\beta(x,y):=\int_{\R^n\setminus\Omega}
\frac{1-\beta(z)}{\displaystyle{|x-z|}^{n+2s}{|y-z|}^{n+2s}\int_\Omega\frac{dw}{{|z-w|}^{n+2s}}}\;dz,
\qquad x,y\in\Omega.
\end{align}
We simply write $k(x,y)$ when $\beta=0$ in $\R^n\setminus\Omega$.

\subsection{Main result} The precise statements go as follows.
\begin{theorem}\label{thm}
Fix $\Omega\subset\R^n$ open, bounded, and with $C^{1,1}$ boundary.
Consider a measurable~$\beta:\R^n\to[0,1]$ such that $\beta=0$ in $\Omega$.
Let $u\in C^{2s+\alpha}_{loc}(\Omega)\cap L^\infty(\Omega)$, for some $\alpha>0$, satisfy
\begin{align}\label{robin}
\beta(x)u(x)+\big(1-\beta(x)\big)\widetilde\NN_s u(x)=0\quad\text{for a.e. }\ x\in\R^n\setminus\Omega.
\end{align}
Then, for $x\in\Omega$,
\begin{align*}
& \Ds u(x) = \Ds_\Omega u(x)-u(x)\Ds\beta(x)+
c_{n,s}\int_\Omega \big(u(x)-u(y)\big)\;k_\beta(x,y)\;dy \\
& \ = c_{n,s}\:\pv\int_\Omega\big(u(x)-u(y)\big)\left(\frac1{{|x-y|}^{n+2s}}+k_\beta(x,y)\right)dy
+u(x)\:c_{n,s}\int_{\R^n\setminus\Omega}\frac{\beta(y)}{{|x-y|}^{n+2s}}\;dy
\end{align*}
where $k_\beta:\Omega\times\Omega\to\R$ is given by~\eqref{k}, nonnegative, symmetric, continuous,
and for any fixed~$x\in\Omega$
there exists $C=C(x)>0$ such that
\begin{align}\label{est}
k_\beta(x,y) 
\leq C\big(1+|\ln\dist(y,\supp(1-\beta))|\big)
\qquad\text{for any }\ y\in\Omega.
\end{align}
\end{theorem}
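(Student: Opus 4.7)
The strategy is to split the integral defining $\Ds u(x)$ into the part over $\Omega$, which by definition is $\Ds_\Omega u(x)$, and the part over $\R^n\setminus\Omega$, and then to use~\eqref{robin} to eliminate the unknown values of $u$ outside $\Omega$. Abbreviate $I(z):=\int_\Omega u(w)\,|z-w|^{-n-2s}\,dw$ and $J(z):=\int_\Omega |z-w|^{-n-2s}\,dw$. The identity $\widetilde\NN_s u(z)=u(z)-I(z)/J(z)$ turns~\eqref{robin} into the explicit representation
\[
u(z)=\big(1-\beta(z)\big)\,\frac{I(z)}{J(z)}\qquad\text{for a.e. }z\in\R^n\setminus\Omega,
\]
which is also consistent with the degenerate case $\beta(z)=1$ and yields $u\in L^\infty(\R^n)$.

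Inserting this formula into the integral over $\R^n\setminus\Omega$ and applying Fubini (justified by the $L^\infty$ bound) converts the $u(z)$-contribution into $-c_{n,s}\int_\Omega u(y)\,k_\beta(x,y)\,dy$. For the remaining piece $c_{n,s} u(x)\int_{\R^n\setminus\Omega}|x-z|^{-n-2s}\,dz$ one splits $1=\beta(z)+(1-\beta(z))$: the $\beta$-part equals $-u(x)\,\Ds\beta(x)$, since $\beta=0$ on a neighborhood of $x$ reduces the principal value to an ordinary integral; the $(1-\beta)$-part, after multiplying and dividing by $J(z)$ and swapping integrals, equals $c_{n,s} u(x)\int_\Omega k_\beta(x,y)\,dy$. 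Collecting these four pieces gives the first identity of the statement; the second follows by merging the regular kernel $k_\beta$ inside the principal-value integral together with $|x-y|^{-n-2s}$, and by recalling $-\Ds\beta(x)=c_{n,s}\int_{\R^n\setminus\Omega}\beta(y)|x-y|^{-n-2s}\,dy$.

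For the properties of $k_\beta$: nonnegativity and symmetry are immediate from~\eqref{k}, and continuity on $\Omega\times\Omega$ follows by dominated convergence (for $(x,y)$ in a small neighborhood of $(x_0,y_0)\in\Omega\times\Omega$, the integrand in~\eqref{k} is uniformly dominated by a constant multiple of its value at $(x_0,y_0)$). The bound~\eqref{est} is the main technical ingredient. My plan here is to invoke the standard lower estimate $J(z)\geq c\,\dist(z,\partial\Omega)^{-2s}$ in an outer neighborhood of $\partial\Omega$, available from the $C^{1,1}$ hypothesis, which majorizes the integrand of~\eqref{k} by
\[
C(x)\,\frac{(1-\beta(z))\,\dist(z,\partial\Omega)^{2s}}{|y-z|^{n+2s}}.
\]
The elementary inequality $\dist(z,\partial\Omega)\leq|y-z|$ (valid because $y\in\Omega$ and $z\notin\Omega$) then collapses this to $C(x)(1-\beta(z))\,|y-z|^{-n}$, and polar integration around $y$ over the effective support of $1-\beta$ inside $\R^n\setminus\Omega$ produces the logarithm of $\dist(y,\supp(1-\beta))$ required by~\eqref{est}. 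Points $z$ far from $\partial\Omega$ but bounded, as well as points near infinity, contribute only bounded terms absorbed into $C(x)$, thanks to the decay $|x-z|^{-n-2s}\leq C(x)(1+|z|)^{-n-2s}$ and the positive lower bound on $J$ away from $\partial\Omega$. The only real work is the bookkeeping across these regimes; no new analytic tool is required.
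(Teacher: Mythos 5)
Your proposal is correct and follows essentially the same route as the paper: rewrite the Robin condition as $u=(1-\beta)\,I/J$ a.e.\ outside $\Omega$, substitute it into the exterior part of $\Ds u(x)$, exchange the order of integration to produce $k_\beta$, and obtain \eqref{est} from the boundary behaviour $J(z)\gtrsim d(z)^{-2s}\wedge d(z)^{-n-2s}$ combined with $d(z)\le|y-z|$ and polar coordinates around $y$. The only minor deviations are that you justify Fubini directly from the $L^\infty$ bound via $\int_\Omega|u(w)|\,{|z-w|}^{-n-2s}\,dw\le\|u\|_{L^\infty(\Omega)}J(z)$ (the paper instead invokes the kernel estimate of its Lemma~\ref{lem:first}) and that you spell out the continuity of $k_\beta$, which the paper states without proof.
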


A particular form of the mixed boundary condition in \eqref{robin}
(namely, when $\beta$ is a characteristic function)
has been used by Leonori, Medina, Peral, Primo, and Soria \cite{lmpps}
to study a principal eigenvalue problem.

In the particular case when $\beta=1$ in $\R^n\setminus\Omega$ in the above theorem,
we have 
\begin{align*}
\Ds u(x) = \Ds_\Omega u(x)+u(x)\Ds\chi_{\Omega}(x),
\end{align*}
where $\chi_\Omega$ denotes the characteristic function of $\Omega$.
Conversely, when $\beta=0$ in $\R^n$,
we entail the following.
\begin{corollary}\label{cor}
In the assumptions of Theorem \ref{thm}, if $u$ satisfies
\begin{align*}
\widetilde\NN_s u(x)=0\qquad\text{for a.e. }\ x\in\R^n\setminus\Omega,
\end{align*}
then, for $x\in\Omega$,
\begin{align}\label{eq:main}
\begin{split}
\Ds u(x)
& = \Ds_\Omega u(x)+c_{n,s}\int_\Omega\big(u(x)-u(y)\big)\,k(x,y)\;dy \\
& = c_{n,s}\:\pv\int_\Omega\big(u(x)-u(y)\big)\left(\frac1{{|x-y|}^{n+2s}}+k(x,y)\right)dy
\end{split}
\end{align}
where 
\begin{align}\label{eq:main2}
k(x,y):=\int_{\R^n\setminus\Omega}
\frac{dz}{\displaystyle{|x-z|}^{n+2s}{|y-z|}^{n+2s}\int_\Omega\frac{dw}{{|z-w|}^{n+2s}}},
\qquad x,y\in\Omega,
\end{align}
is positive, symmetric, continuous, and for any fixed $x\in\Omega$
there exists $C=C(x)>0$ such that
\begin{align}\label{est2}
k(x,y) 
\leq C\big(1+|\ln\dist(y,\R^n\setminus\Omega)|\big)
\qquad\text{for any }\ y\in\Omega.
\end{align}
Moreover, it also holds
\begin{align}\label{est3}
k(x,y) \geq \frac1C\big(1+|\ln\dist(y,\R^n\setminus\Omega)|\big)
\qquad\text{for any }\ y\in\Omega.
\end{align}
\end{corollary}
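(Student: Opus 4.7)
The formula~\eqref{eq:main} is the direct specialization of Theorem~\ref{thm} to $\beta\equiv 0$: both correction terms involving $\beta$ then vanish identically. The properties of $k(x,y)$ follow in parallel: positivity and symmetry are manifest from~\eqref{eq:main2}, and continuity at any point of $\Omega\times\Omega$ reduces to the dominated convergence theorem, since on compact subsets of $\Omega\times\Omega$ the integrand in~\eqref{eq:main2} admits an $(x,y)$-uniform integrable majorant in $z$ (both $|x-z|$ and $|y-z|$ stay bounded away from $0$, and the inner integral is controlled from below as recalled below). The substantive content of the corollary is therefore the two-sided bound~\eqref{est2}--\eqref{est3}.

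For the upper bound, fix $x\in\Omega$ and set $\delta:=\dist(y,\R^n\setminus\Omega)$. The $C^{1,1}$ regularity of $\partial\Omega$ furnishes the well-known estimate
\begin{align*}
\int_\Omega |z-w|^{-n-2s}\,dw\;\geq\;c\,\dist(z,\partial\Omega)^{-2s}\qquad\text{for all }z\in\R^n\setminus\Omega,
\end{align*}
with $c>0$ depending only on $\Omega$. Combined with the trivial inequality $|x-z|\geq\dist(x,\partial\Omega)$ for $z\in\R^n\setminus\Omega$, this reduces \eqref{est2} to controlling $\int_{\R^n\setminus\Omega}\dist(z,\partial\Omega)^{2s}|y-z|^{-n-2s}\,dz$. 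Split the domain at $|z-y|=r_0$, with $r_0$ small enough depending only on $\Omega$: the outer part is plainly bounded. For the inner part, flatten $\partial\Omega$ around the boundary point $y^\ast$ closest to $y$ and integrate out the tangential variables; what remains is the one-dimensional integral $\int_0^{r_0}t^{2s}(\delta+t)^{-1-2s}\,dt$, which is $O(1+|\ln\delta|)$ as $\delta\downarrow 0$.

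The lower bound \eqref{est3} is obtained by restricting the integral in~\eqref{eq:main2} to a thin cylindrical tube $\mathcal{T}\subset\R^n\setminus\Omega$ of fixed diameter based at $y^\ast$ and aligned with the outer normal to $\partial\Omega$ there. On $\mathcal{T}$ the $C^{1,1}$ regularity yields matching two-sided bounds $|y-z|\leq C(\delta+|z-y^\ast|)$ and $\int_\Omega|z-w|^{-n-2s}\,dw\leq C\dist(z,\partial\Omega)^{-2s}$, while $|x-z|$ is trivially bounded above; the same change of variables as before reproduces the same one-dimensional integral, this time from below, yielding $c(1+|\ln\delta|)$. The main technical obstacle is uniformity: one has to ensure that $r_0$, the tube $\mathcal{T}$, and all the implicit constants in the flattening stay nondegenerate as $y^\ast$ varies on $\partial\Omega$, which is guaranteed by the compactness of $\partial\Omega$ together with its uniform $C^{1,1}$ character.
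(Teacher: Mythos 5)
Your specialization of Theorem~\ref{thm} to $\beta\equiv0$ and your lower-bound argument are fine, and in fact coincide with the paper's route (the paper proves the corollary exactly by invoking Theorem~\ref{thm} together with Lemma~\ref{lem:second}, whose proof is the same normal-tube/flattening computation reducing to $\int t^{2s}(1+t)^{-1-2s}\,dt$). The gap is in your upper bound. You claim that $\int_\Omega|z-w|^{-n-2s}\,dw\geq c\,d(z)^{-2s}$ for \emph{all} $z\in\R^n\setminus\Omega$. This is false away from $\Omega$: since $\Omega$ is bounded, $\int_\Omega|z-w|^{-n-2s}\,dw\leq|\Omega|\,d(z)^{-n-2s}$, which for $d(z)$ large is much smaller than $d(z)^{-2s}$; the correct statement is the two-sided bound of Lemma~\ref{lem:veryfirst}, namely that the integral is comparable to $d(z)^{-2s}\wedge d(z)^{-n-2s}$. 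The error is not cosmetic. After you also discard $|x-z|^{-n-2s}$ via $|x-z|\geq\dist(x,\partial\Omega)$, you have reduced \eqref{est2} to bounding $\int_{\R^n\setminus\Omega}d(z)^{2s}|y-z|^{-n-2s}\,dz$, whose integrand behaves like $|z|^{-n}$ as $|z|\to\infty$; this integral is $+\infty$, so your assertion that ``the outer part is plainly bounded'' fails for the quantity you actually wrote down. In short, the far field (where $d(z)$ is large) is left untreated, and it cannot be treated after throwing away the factor $|x-z|^{-n-2s}$.

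The repair is exactly what the paper does in Lemmas~\ref{lem:veryfirst} and~\ref{lem:first}: use the reciprocal bound $d(z)^{2s}\vee d(z)^{n+2s}$ and split according to $d(z)<1$ and $d(z)\geq1$ rather than (only) according to $|z-y|$. On $\{d(z)\geq1\}$ keep the factor $|x-z|^{-n-2s}$ and use $d(z)\leq|x-z|$, so the integrand is at most $|y-z|^{-n-2s}$ with $|y-z|\geq d(z)\geq 1$, giving a bound uniform in $x,y\in\Omega$. Only on the bounded region $\{d(z)<1\}$ may you bound $|x-z|^{-n-2s}\leq\dist(x,\partial\Omega)^{-n-2s}$ and work with $d(z)^{2s}|y-z|^{-n-2s}$; there your flattening argument (or, more cheaply, $d(z)\leq|y-z|$ followed by polar coordinates, as in Lemma~\ref{lem:first}) yields the logarithmic bound $C(1+|\ln\dist(y,\R^n\setminus\Omega)|)$. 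With this correction your proof of \eqref{est2} closes, and the remaining parts of the proposal stand as written.
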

This last corollary is saying that, when homogeneous nonlocal Neumann conditions are assumed,
the fractional Laplacian amounts to be a perturbation of the regional operator $\Ds_\Omega$
defined in~\eqref{regional}:
this nice property becomes particularly 
interesting when thinking of its stochastic repercussions.

\subsection{Stochastic heuristics}
The coupling of $\Ds$ with $\widetilde\NN_su=0$ has indeed a precise interpretation 
from the stochastic perspective. We quote from~\cite{MR3651008}:

\begin{quotation}\it
``The probabilistic interpretation of the Neumann problem (1.4)~\emph{[\eqref{prob:neu-par} in this note]} 
may be summarized as follows:
\begin{enumerate}
\item[(1)] $u(x,t)$ is the probability distribution of the position 
of a particle moving randomly inside $\Omega$.
\item[(2)] When the particle exits $\Omega$, it immediately comes back into $\Omega$.
\item[(3)] The way in which it comes back inside $\Omega$ is the following: 
If the particle has gone to $x\in\R^n\setminus\Omega$, 
it may come back to any point $y\in\Omega$, 
the probability density of jumping from $x$ to $y$ being proportional to ${|x-y|}^{-n-2s}$.
\end{enumerate}
These three properties lead to the equation (1.4)~\emph{[\eqref{prob:neu-par} in this note]}, 
being $u_0$ the initial probability distribution of the position of the particle.''
\end{quotation}
We refer to~\cite[Section 2.1]{MR3651008} for further details. 
The described process might look quite similar to a~\textit{censored process},
as introduced by Bogdan, Burdzy, and Chen~\cite{MR2006232}, 
or at least to a~\textit{stable-like process}
(following the wording of Chen and Kumagai~\cite{MR2008600}).
The class of stable-like processes is the one induced
by infinitesimal generators of the type (see \cite[equation (1.3)]{MR2008600})
\begin{align*}
\pv\int_\Omega\frac{\big(u(x)-u(y)\big)\:j(x,y)}{{|x-y|}^{n+2s}}\;dy
\end{align*}
where the kernel $j$ is supposed to be positive, symmetric, and 
bounded between two constants
\begin{align*}
\frac1C\leq j(x,y)\leq C,\qquad x,y\in\Omega.
\end{align*}
In view of Corollary~\ref{cor}, and in particular of estimate~\eqref{est2},
the process built in~\cite{MR3651008}
does not fall into the stable-like class
because of its singular boundary behaviour,
although such singularity is rather weak.

\section{Estimates}
As a standing hypothesis, we consider here $\Omega\subset\R^n$ open, bounded, and with
$C^{1,1}$ boundary satisfying the interior and the exterior sphere condition.

\begin{lemma}\label{lem:veryfirst} 
There exists~$c>0$ such that, for any $z\in\R^n\setminus\overline\Omega$,
\begin{align}\label{49687974}
\frac1c\left(d(z)^{-2s}\wedge d(z)^{-n-2s}\right)\leq \int_\Omega\frac{dw}{{|z-w|}^{n+2s}} \leq c\left(d(z)^{-2s}\wedge d(z)^{-n-2s}\right).
\end{align}
\end{lemma}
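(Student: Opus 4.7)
The plan is to establish the two-sided estimate~\eqref{49687974} by treating the upper and lower bounds separately; the starting observation in both directions is that, since $z \notin \overline\Omega$, one has $|z-w| \geq d(z)$ for every $w \in \Omega$. Write $d := d(z)$.

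For the upper bound, this inequality produces two estimates at once. Enlarging the domain of integration and switching to polar coordinates centered at $z$,
\begin{align*}
\int_\Omega \frac{dw}{|z-w|^{n+2s}} \leq \int_{\R^n \setminus B_d(z)} \frac{dw}{|z-w|^{n+2s}} = \frac{|S^{n-1}|}{2s}\, d^{-2s};
\end{align*}
while bounding the integrand pointwise yields $\int_\Omega |z-w|^{-n-2s}\, dw \leq |\Omega|\, d^{-n-2s}$. Taking the smaller of the two gives the upper bound in~\eqref{49687974}.

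For the lower bound, I would invoke the interior sphere condition. Letting $x_0 \in \partial\Omega$ be a nearest point to $z$, $\hat\nu := (x_0 - z)/d$, and $\rho > 0$ the uniform interior sphere radius, one has $B_\rho(y_0) \subset \Omega$ with $y_0 := x_0 + \rho \hat\nu$. For every $r \in (0, \rho]$, the ball $B_r(x_0 + r\hat\nu)$ is again contained in $B_\rho(y_0)$: writing $w = x_0 + r\hat\nu + v$ with $|v| \leq r$, one estimates $|w - y_0| \leq (\rho - r) + r = \rho$. By collinearity, $|z - (x_0 + r\hat\nu)| = d+r$, so $d \leq |z-w| \leq d+2r$ on this ball, giving
\begin{align*}
\int_\Omega \frac{dw}{|z-w|^{n+2s}} \geq |B_r|\,(d+2r)^{-n-2s} = c_n\, r^n\, (d+2r)^{-n-2s}.
\end{align*}
Choosing $r = d \wedge \rho$ then produces an estimate of order $d^{-2s}$ when $d \leq \rho$, and of order $d^{-n-2s}$ when $d > \rho$.

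The main (though essentially bookkeeping) obstacle is reconciling the threshold $\rho$ of this case split with the threshold $1$ at which the expression $d^{-2s} \wedge d^{-n-2s}$ itself changes form. A case analysis over the three regimes $d \leq \rho \wedge 1$, $\rho \wedge 1 < d \leq \rho \vee 1$, and $d > \rho \vee 1$ verifies in each that the lower bound obtained above dominates $d^{-2s} \wedge d^{-n-2s}$ up to a constant depending only on $n$, $s$, $\rho$, and $|\Omega|$, completing the proof.
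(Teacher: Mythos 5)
Your proof is correct, and while the upper bound is the same as the paper's (polar coordinates outside $B_{d(z)}(z)$ plus the trivial pointwise bound giving $|\Omega|\,d^{-n-2s}$), your lower bound takes a genuinely different route. The paper splits into ``$d(z)$ large'' and ``$d(z)$ small'': for large $d(z)$ it uses Fatou's lemma to get the asymptotic $\liminf_{|z|\to\infty}|z|^{n+2s}\int_\Omega|z-w|^{-n-2s}\,dw\geq|\Omega|$, and for small $d(z)$ it places an interior tangent ball at the nearest boundary point and passes to the limit after rescaling by $d(z)$; this yields the two bounds only for $d(z)$ sufficiently small or sufficiently large, with the intermediate regime left to implicit continuity and positivity considerations. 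Your construction of the single ball $B_r(x_0+r\hat\nu)\subset B_\rho(y_0)\subset\Omega$ with $r=d\wedge\rho$, on which $d\leq|z-w|\leq d+2r$, replaces both limiting arguments by one explicit computation, is uniform in $z$, and produces a constant depending only on $n$, $s$, $\rho$ (the uniform interior sphere radius guaranteed by the $C^{1,1}$ assumption); it also covers all values of $d$ at once, so nothing is deferred to compactness. One simplification: the final ``bookkeeping'' over three regimes is unnecessary, since $d^{-2s}\wedge d^{-n-2s}$ is trivially dominated by each of $d^{-2s}$ and $d^{-n-2s}$, so the two cases $d\leq\rho$ (giving $c\,d^{-2s}$) and $d>\rho$ (giving $c\,d^{-n-2s}$) already finish the proof regardless of how $\rho$ compares with $1$.
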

\begin{proof}
As $\Omega$ is bounded, there exists $R>0$ such that
\begin{align*}
\Omega\subset B_R(z)\setminus B_{d(z)}(z).
\end{align*}
Therefore, using polar coordinates,
\begin{align*}
\int_\Omega\frac{dw}{{|z-w|}^{n+2s}} 
& \leq \int_{B_R(z)\setminus B_{d(z)}(z)}\frac{dw}{{|z-w|}^{n+2s}} 
 =|\partial B|\int_{d(z)}^R t^{-1-2s}\;dt\leq\frac{|\partial B|}{2s}\;d(z)^{-2s}.
\end{align*}
Also
\begin{align*}
\int_\Omega\frac{dw}{{|z-w|}^{n+2s}} \leq \int_\Omega\frac{dw}{{d(z)}^{n+2s}}=|\Omega|\;d(z)^{-n-2s},
\end{align*}
from which we conclude that there exists $c>0$ such that
\begin{align*}
\int_\Omega\frac{dw}{{|z-w|}^{n+2s}} \leq c\left(d(z)^{-2s}\wedge d(z)^{-n-2s}\right).
\end{align*}
To get also the inverse inequality, we split the analysis for $d(z)$ small and large.
Indeed for $d(z)$ large one has, by Fatou's Lemma,
\begin{align*}
\liminf_{|z|\uparrow\infty}|z|^{n+2s}\int_\Omega\frac{dw}{{|z-w|}^{n+2s}}
\geq \int_\Omega\liminf_{|z|\uparrow\infty}\frac{|z|^{n+2s}}{{|z-w|}^{n+2s}}\;dw
=|\Omega|
\end{align*}
so that
\begin{align*}
\int_\Omega\frac{dw}{{|z-w|}^{n+2s}}
\geq c\;d(z)^{-n-2s}\qquad \text{for }d(z)\text{ sufficiently large.}
\end{align*}
Let us turn now to the case $d(z)$ small.
Since $\partial\Omega$ is compact and smooth, $d$ is a continuous function.
For any $z\in\R^n\setminus\Omega$ there exists $\pi(z)\in\partial\Omega$ such that $d(z)=|\pi(z)-z|$
and, by the interior sphere condition, there are $w_0\in\Omega$ and $r>0$ such that
$B_r(w_0)\subset\Omega$ and $\partial B_r(w_0)\cap\partial\Omega=\{\pi(z)\}$.
So
\begin{align*}
\int_\Omega\frac{dw}{{|z-w|}^{n+2s}} \geq \int_{B_r(w_0)}\frac{dw}{{|z-w|}^{n+2s}}.
\end{align*}
Up to a rotation and a translation, we can suppose $z=0$ and $w_0=\big(r+d(z)\big)e_1$. Then
\begin{align*}
\int_\Omega\frac{dw}{{|z-w|}^{n+2s}} \geq \int_{B_r\big((r+d(z))e_1\big)}\frac{dw}{{|w|}^{n+2s}}
=\frac1{d(z)^{2s}}\int_{B_{r/d(z)}\big(\frac{r+d(z)}{d(z)}e_1\big)}\frac{d\xi}{{|\xi|}^{n+2s}}.
\end{align*}
As $d(z)\downarrow 0$ we have the convergence
\begin{align*}
\int_{B_{r/d(z)}\big(\frac{r+d(z)}{d(z)}e_1\big)}\frac{d\xi}{{|\xi|}^{n+2s}}
\longrightarrow \int_{\{\xi_1>1\}}\frac{d\xi}{{|\xi|}^{n+2s}}
\end{align*}
so that also 
\begin{align*}
\int_\Omega\frac{dw}{{|z-w|}^{n+2s}} \geq c\;d(z)^{-2s}\qquad\text{for }d(z)\text{ sufficiently small},
\end{align*}
from which we conclude the validity of \eqref{49687974}.
\end{proof}

The following Lemma computes an upper bound on $k_\beta$.

\begin{lemma}\label{lem:first}
For a measurable $\beta:\R^n\setminus\Omega\to[0,1]$ and $x,y\in\Omega$ let
$k_\beta$ be defined as in \eqref{k}.
Then, for any~$x\in\Omega$, there exists~$C=C(x)>0$ such that
\begin{align}\label{1567464}
k_\beta(x,y) 
\leq C\big(1+|\ln\dist(y,\supp(1-\beta))|\big)
\qquad\text{for any }\ x,y\in\Omega.
\end{align}
\end{lemma}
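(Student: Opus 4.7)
The plan is to start from definition~\eqref{k}, apply the lower bound from Lemma~\ref{lem:veryfirst} to the $w$-integral in the denominator, and then split the remaining $z$-integration over $\R^n\setminus\Omega$ into a ``far'' piece where $d(z)>1$ and a ``near'' piece where $d(z)\le 1$. Specifically, after inserting $\bigl(\int_\Omega |z-w|^{-n-2s}\,dw\bigr)^{-1}\le c\,(d(z)^{2s}\vee d(z)^{n+2s})$, one gets
\[
k_\beta(x,y)\le c\int_{\supp(1-\beta)\setminus\Omega}\frac{d(z)^{2s}\vee d(z)^{n+2s}}{{|x-z|}^{n+2s}\,{|y-z|}^{n+2s}}\,dz,
\]
and on $\supp(1-\beta)$ we have $|y-z|\ge \delta:=\dist(y,\supp(1-\beta))$.

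The geometric observation that I would use throughout is that, whenever $z\in\R^n\setminus\Omega$ and $y\in\Omega$, the segment $[y,z]$ crosses $\partial\Omega$, so $|y-z|\ge d(z)$; the analogous inequality holds with $x$ in place of $y$. On the far region $\{d(z)>1\}$ this immediately gives
\[
\frac{d(z)^{n+2s}}{{|x-z|}^{n+2s}\,{|y-z|}^{n+2s}}\le \frac{1}{{|x-z|}^{n+2s}},
\]
so the far contribution is bounded by $\int_{\R^n\setminus\Omega}{|x-z|}^{-n-2s}\,dz\le C\,d(x)^{-2s}$ (polar coordinates around $x$, using $|x-z|\ge d(x)$), yielding a constant $C(x)$ with no dependence on $y$.

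On the near region $\{d(z)\le 1\}$ we have $d(z)^{2s}\vee d(z)^{n+2s}=d(z)^{2s}$. Here I would bound ${|x-z|}^{-n-2s}\le d(x)^{-n-2s}$ uniformly (the loss of decay at infinity is harmless since this region is bounded) and then apply $d(z)\le |y-z|$ once more to lower the power of $|y-z|$ from $n+2s$ down to the critical exponent $n$, reaching
\[
\int_{N\cap\supp(1-\beta)} \frac{d(z)^{2s}}{{|y-z|}^{n+2s}}\,dz \le \int_{\{\delta\le |y-z|\le R\}} \frac{dz}{{|y-z|}^n},
\]
where $R$ bounds the diameter of $\{y\}\cup(N\cap\supp(1-\beta))$ and thus depends only on $\Omega$. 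Polar coordinates around $y$ then give exactly $|\partial B_1|\,\ln(R/\delta)\le C\,(1+|\ln\delta|)$.

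Summing the two contributions yields~\eqref{1567464}. The single step on which the logarithmic behaviour hinges is the use of $d(z)\le |y-z|$ to push the exponent down to the scale-invariant value $n$, which is what forces the appearance of the logarithm; this is the heart of the proof, but it is a one-line manipulation rather than a real obstacle. The only subtlety to watch is bookkeeping the $x$-dependence of the constants, which is transparent because $x$ is held fixed throughout.
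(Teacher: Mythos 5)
Your proposal is correct and follows essentially the same route as the paper: insert the lower bound of Lemma~\ref{lem:veryfirst}, split according to $d(z)\lessgtr 1$, absorb the $|x-z|^{-n-2s}$ factor into a constant depending on $x$, and use $d(z)\le|y-z|$ to reduce the near-region integrand to the critical exponent $n$, which yields the logarithm by polar coordinates around $y$. The only differences are cosmetic (which factor absorbs $d(z)^{n+2s}$ on the far region, and the explicit annulus bookkeeping), so there is nothing to add.
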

\begin{proof}
Denote by $N:=\supp(1-\beta)$. The term
\begin{align*}
\int_\Omega\frac{dw}{{|z-w|}^{n+2s}},\qquad z\in N\subset\R^N\setminus\Omega
\end{align*}
has been taken care of with Lemma~\ref{lem:veryfirst}. In order to prove~\eqref{1567464},
we plug~\eqref{49687974} into~\eqref{k}, so that we are left with estimating
\begin{align*}
\int_{\R^n\setminus\Omega}
\frac{1-\beta(z)}{\displaystyle{|x-z|}^{n+2s}{|y-z|}^{n+2s}\left(d(z)^{-2s}\wedge d(z)^{-n-2s}\right)}\;dz
\leq\int_N
\frac{d(z)^{2s}\vee d(z)^{n+2s}}{{|x-z|}^{n+2s}{|y-z|}^{n+2s}}\;dz
\end{align*}
in which we split the integration as follows
\begin{align*}
\int_{N\cap\{d(z)<1\}}
\frac{d(z)^{2s}}{{|x-z|}^{n+2s}{|y-z|}^{n+2s}}\;dz+
\int_{N\cap\{d(z)\geq1\}}
\frac{d(z)^{n+2s}}{{|x-z|}^{n+2s}{|y-z|}^{n+2s}}\;dz.
\end{align*}
The second addend is obviously uniformly bounded (above and below away from 0) in~$x,y\in\Omega$.
Therefore, we must concentrate on the first one: since~$x$ is fixed inside~$\Omega$,
we can drop term~$|x-z|^{-n-2s}$.
Then, since~$d(z)<|y-z|$,
\begin{align*}
\int_{N\cap\{d(z)<1\}}
\frac{d(z)^{2s}}{{|y-z|}^{n+2s}}\;dz & \leq 
\int_{N\cap\{d(z)<1\}}
\frac{dz}{{|y-z|}^{n}}\leq
\int_{(\R^n\setminus B_{\dist(y,N)}(y))\cap\{d(z)<1\}}
\frac{dz}{{|y-z|}^{n}} \\
& \leq |\partial B|\left|\int_{\dist(y,N)}^{\diam(\Omega)+2}\frac{dt}{t}\right|\leq 
C\big(1+\big|\ln\dist(y,N)\big|\big)
\end{align*}
where $\diam(\Omega)=\sup\{|x-y|:x,y\in\Omega\}$.
\end{proof}

Finally, this Lemma shows how the upper bound in \eqref{1567464} is optimal.

\begin{lemma}\label{lem:second}
For a measurable $\beta:\R^n\setminus\Omega\to[0,1]$ and $x,y\in\Omega$ let
$k_\beta$ be defined as in \eqref{k}. Suppose that, for some $\eps>0$,
$N_\eps:=\{\beta<1-\eps\}$ is nonempty.
Then, for any~$x\in\Omega$, there exists~$C=C(x,\eps)>0$ such that
\begin{align}\label{15674642}
k_\beta(x,y) \geq
\frac1C\big(1+|\ln\dist(y,N_\eps)|\big) 
\end{align}
\end{lemma}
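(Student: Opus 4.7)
The plan is to mirror the proof of Lemma~\ref{lem:first} with reversed inequalities. Since $1-\beta(z)>\eps$ on $N_\eps$, and Lemma~\ref{lem:veryfirst} supplies an upper bound on $\int_\Omega dw/|z-w|^{n+2s}$ which inverts to a lower bound on its reciprocal, the starting point is
\begin{equation*}
k_\beta(x,y)\;\geq\;\frac{\eps}{c}\int_{N_\eps}\frac{d(z)^{2s}\vee d(z)^{n+2s}}{{|x-z|}^{n+2s}{|y-z|}^{n+2s}}\,dz.
\end{equation*}
For $x$ fixed inside~$\Omega$, only $z$ in a bounded neighbourhood of~$\Omega$ contribute nontrivially, hence $|x-z|\leq C(x)$ there; moreover the piece over $\{d(z)\geq 1\}$ supplies a uniform positive constant. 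The task thus reduces to showing
\begin{equation*}
\int_{N_\eps\cap\{d(z)<1\}}\frac{d(z)^{2s}}{{|y-z|}^{n+2s}}\,dz\;\geq\;\frac{1}{C}\bigl(1+|\ln\dist(y,N_\eps)|\bigr).
\end{equation*}

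To produce the logarithm I would restrict the integration to a subregion where the \emph{opposite} of the inequality $d(z)<|y-z|$ used in Lemma~\ref{lem:first} holds, namely $d(z)\geq c_0\,|y-z|$. On such a region the integrand exceeds $c_0^{2s}\,|y-z|^{-n}$, and spherical coordinates centred at~$y$ produce the factor $\int_{\dist(y,N_\eps)}^{1}dt/t$, matching the claimed bound. The geometric region itself is built from the exterior sphere condition: at a suitable boundary point $\pi\in\partial\Omega$ the tangent exterior ball contains a truncated cone in which $d(z)$ is comparable to $|z-\pi|$; since $y\in\Omega$ gives $|y-z|\leq d(y)+|z-\pi|$, this yields $d(z)\gtrsim|y-z|$ on the portion of the cone sufficiently far from~$\pi$.

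The main obstacle is that $N_\eps$ is only measurable and may intersect this geometric cone in an irregular manner, whereas the hypothesis controls only $\dist(y,N_\eps)$. I would split into two regimes. If $\dist(y,N_\eps)\geq\delta_0(\eps)$, the right-hand side of~\eqref{15674642} is bounded, and a uniform lower bound follows by integrating over a fixed ball centred at a Lebesgue density point of $N_\eps$ (which must have positive measure for the estimate to be meaningful). If $\dist(y,N_\eps)<\delta_0$, I would pick an approximate minimiser $z^*\in N_\eps$ chosen to be a Lebesgue density point, anchor the exterior sphere construction at the projection $\pi(z^*)\in\partial\Omega$, and use the density property of $N_\eps$ at~$z^*$ to guarantee that $N_\eps$ fills a definite fraction of the truncated cone in each annular shell of scale $r\in[\dist(y,N_\eps),\,1]$. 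Reconciling this measure-theoretic information about $N_\eps$ with the geometric cone produced by the exterior sphere condition is the delicate technical point.
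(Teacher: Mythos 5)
Your central mechanism is the same one the paper uses: the logarithm comes from the region where $d(z)$ is comparable to $|y-z|$, where the integrand of \eqref{k} (after inserting $1-\beta\geq\eps$ on $N_\eps$ and the upper bound of Lemma~\ref{lem:veryfirst}) is of size ${|y-z|}^{-n}$, so that each dyadic shell between $\dist(y,N_\eps)$ and $1$ contributes a fixed amount. The paper implements exactly this by flattening the boundary (``up to a smooth change of variable'') and computing the model integral $\int_{\{0<z_1<1\}} z_1^{2s}\,{|d_N(y)e_1+z|}^{-n-2s}\,dz$, which after integrating out the tangential variables reduces to $\int_1^{1/\dist(y,N_\eps)}dt/t$; your cone $\{d(z)\geq c_0|y-z|\}$ built from the exterior sphere condition is the same computation in different clothes.

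The genuine gap is the step you yourself single out and leave open: making the merely measurable set $N_\eps$ occupy a definite fraction of each shell of that cone. No Lebesgue-density argument can supply this under the lemma's literal hypotheses, because they control only $\dist(y,N_\eps)$, which carries no measure information about $N_\eps$ near the point realizing the distance. For instance, if $1-\beta=0$ a.e.\ in a neighbourhood of $\partial\Omega$ while $N_\eps$ still accumulates at $\partial\Omega$ (say $N_\eps$ is a null set touching the boundary, possibly together with a ball far from $\Omega$), then $k_\beta(x,\cdot)$ stays bounded (or vanishes) while the right-hand side of \eqref{15674642} diverges as $y$ approaches the contact point; in particular your ``approximate minimiser chosen to be a Lebesgue density point'' need not exist near $y$, and no refinement of the density argument can rescue the estimate in this generality. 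To be fair, the paper's own proof does not resolve this either: the phrase ``up to a smooth change of variable'' tacitly assumes that $N_\eps$ contains a full exterior collar of the relevant boundary portion, which is precisely the situation in which the lemma is applied (Corollary~\ref{cor}, $\beta\equiv0$, $N_\eps=\R^n\setminus\Omega$, $\dist(y,N_\eps)=d(y)$); under that extra assumption your exterior-sphere cone construction does close the argument and is essentially equivalent to the paper's model computation. So the correct way to finish is not a cleverer measure-theoretic trick but an added structural hypothesis on $\{\beta\leq1-\eps\}$ (e.g.\ that it contains an exterior neighbourhood of the boundary, or fills a fixed fraction of small balls centred at its points), under which your sketch becomes a complete proof.
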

\begin{proof}
To deduce such lower bound, remark that up to a smooth change of variable,
the behaviour of the concerned integral \eqref{k} defining $k_\beta$ is the same as
\begin{align*}
\int_{\{0<z_1<1\}}
\frac{z_1^{2s}}{{|d_N(y)e_1+z|}^{n+2s}}\;dz
\end{align*}
which is easily computable as
\begin{align*}
\int_{\{0<z_1<1\}}
\frac{z_1^{2s}}{{|d_N(y)e_1+z|}^{n+2s}}\;dz &=
\int_0^1 z_1^{2s}\int_{\R^{n-1}}
\frac{dz'}{{\Big(\big(\dist(y,N_\eps)+z_1\big)^2+{|z'|}^2\Big)}^{n/2+s}}\;dz'\;dz_1 \\
&=\int_0^{1/\dist(y,N_\eps)} t^{2s}\int_{\R^{n-1}}
\frac{d\xi}{{\big({(1+t)}^2+{|\xi|}^2\big)}^{n/2+s}}\;dt \\
&= \int_0^{1/\dist(y,N_\eps)} \frac{t^{2s}}{{(1+t)}^{1+2s}}\int_{\R^{n-1}}
\frac{d\xi}{{\big(1+{|\xi|}^2\big)}^{n/2+s}}\;dt \\
&\geq c\int_1^{1/\dist(y,N_\eps)} \frac{dt}{t}=c|\ln \dist(y,N_\eps)|
\end{align*}
for some $c>0$. 
\end{proof}

\section{Proof of the main results}

\begin{proof}[Proof of Theorem~\ref{thm}]
In the following, consider a function $u:\R^n\to\R$ satisfying \eqref{robin},
which, by \eqref{neu}, can be rewritten
\begin{equation}\label{neu2}
u(x)=\big(1-\beta(x)\big)\bigg(\int_\Omega\frac{dy}{{|x-y|}^{n+2s}}\bigg)^{-1}\int_\Omega\frac{u(y)}{{|x-y|}^{n+2s}}\;dy,
\qquad\text{for a.e. }x\in\R^n\setminus\Omega.
\end{equation}
So, for a fixed $x\in\Omega$,
\begin{align*}
\Ds u(x)
& =c_{n,s}\:\pv\int_{\R^n}\frac{u(x)-u(y)}{{|x-y|}^{n+2s}}\;dy \\
& =c_{n,s}\:\pv\int_{\Omega}\frac{u(x)-u(y)}{{|x-y|}^{n+2s}}\;dy
+c_{n,s}\int_{\R^n\setminus\Omega}\frac{u(x)-u(y)}{{|x-y|}^{n+2s}}\;dy
\end{align*}
and we plug~\eqref{neu2} in the third addend obtaining
\begin{align}
\Ds u(x)=\ & \Ds_\Omega u(x)\ + \notag \\
& + c_{n,s}\int_{\R^n\setminus\Omega}\frac{u(x)-\displaystyle\big(1-\beta(y)\big)\bigg(\int_\Omega\frac{dz}{{|y-z|}^{n+2s}}\bigg)^{-1}\int_\Omega\frac{u(z)}{{|y-z|}^{n+2s}}\;dz}{{|x-y|}^{n+2s}}\;dy \notag \\
=\ & \Ds_\Omega u(x)+u(x)\int_{\R^n\setminus\Omega}\frac{\beta(y)}{{|x-y|}^{n+2s}}\;dy \notag \\
& +c_{n,s}\int_{\R^n\setminus\Omega}\frac{\displaystyle u(x)\int_\Omega\frac{dz}{{|y-z|}^{n+2s}}-\int_\Omega\frac{u(z)}{{|y-z|}^{n+2s}}\;dz}{\displaystyle{|x-y|}^{n+2s}\int_\Omega\frac{dz}{{|y-z|}^{n+2s}}}\;\big(1-\beta(y)\big)\;dy \notag \\
=\ & \Ds_\Omega u(x)-u(x)\Ds\beta(x)\ + \notag \\
& + c_{n,s}\int_{\R^n\setminus\Omega}\frac{\displaystyle \int_\Omega\frac{u(x)-u(z)}{{|y-z|}^{n+2s}}\;dz}{\displaystyle{|x-y|}^{n+2s}\int_\Omega\frac{dw}{{|y-w|}^{n+2s}}}\;\big(1-\beta(y)\big)\;dy \label{468} \\
=\ & \Ds_\Omega u(x)-u(x)\Ds\beta(x)\ + \notag \\
& + c_{n,s}\int_{\Omega}\big( u(x)-u(z) \big)\int_{\R^n\setminus\Omega}
\frac{1-\beta(y)}{\displaystyle{|x-y|}^{n+2s}{|y-z|}^{n+2s}\int_\Omega\frac{dw}{{|y-w|}^{n+2s}}}\;dy\;dz \label{469} \\
=\ & \Ds_\Omega u(x)-u(x)\Ds\beta(x)+c_{n,s}\int_\Omega \big(u(x)-u(z)\big)\;k_\beta(x,z)\;dz \notag
\end{align}
The exchange in the integration order from~\eqref{468} to~\eqref{469}
is justified by the Fubini's Theorem, by noticing that for~$x,z\in\Omega$ and $y\in\R^n\setminus\Omega$
(recall that $u$ and $\beta$ are bounded by assumption)
\begin{align*}
\frac{|u(x)-u(z)|\:\big(1-\beta(y)\big)}{\displaystyle{|x-y|}^{n+2s}{|y-z|}^{n+2s}\int_\Omega\frac{dw}{{|y-w|}^{n+2s}}}
\leq \frac{2\|u\|_{L^\infty(\Omega)}}
{\displaystyle{|x-y|}^{n+2s}{|y-z|}^{n+2s}\int_\Omega\frac{dw}{{|y-w|}^{n+2s}}},
\end{align*}
and
\begin{align*}
\int_\Omega\int_{\R^n\setminus\Omega}\frac{dy}
{\displaystyle{|x-y|}^{n+2s}{|y-z|}^{n+2s}\int_\Omega\frac{dw}{{|y-w|}^{n+2s}}}\;dz
\end{align*}
is finite by Lemma~\ref{lem:first} (with $\beta\equiv 0$),
which in turn proves also~\eqref{est}.
This concludes the proof.
\end{proof}

\begin{proof}[Proof of Corollary~\ref{cor}]
This is a straightforward consequence of Theorem~\ref{thm},
together with Lemma~\ref{lem:second} for deducing the lower bound in~\eqref{15674642}.
\end{proof}

\section*{Bibliography}
\begin{biblist}
\bib{av}{article}{
	author={Abatangelo, N.},
	author={Valdinoci, E.},
	title={Getting acquainted with the fractional Laplacian},
	journal={preprint},
	eprint={arXiv:1710.11567},
	date={Oct. 2017},
}
\bib{MR2006232}{article}{
   author={Bogdan, K.},
   author={Burdzy, K.},
   author={Chen, Z.-Q.},
   title={Censored stable processes},
   journal={Probab. Theory Related Fields},
   volume={127},
   date={2003},
   number={1},
   pages={89--152},
}
\bib{MR2008600}{article}{
   author={Chen, Z.-Q.},
   author={Kumagai, T.},
   title={Heat kernel estimates for stable-like processes on $d$-sets},
   journal={Stochastic Process. Appl.},
   volume={108},
   date={2003},
   number={1},
   pages={27--62},
}
\bib{MR3651008}{article}{
   author={Dipierro, S.},
   author={Ros-Oton, X.},
   author={Valdinoci, E.},
   title={Nonlocal problems with Neumann boundary conditions},
   journal={Rev. Mat. Iberoam.},
   volume={33},
   date={2017},
   number={2},
   pages={377--416},
}
\bib{lmpps}{article}{
   author={Leonori, T.},
   author={Medina, M.},
   author={Peral, I.},
   author={Primo, A.},
   author={Soria, F.},
   title={Principal eigenvalue of mixed problem for the fractional Laplacian:
   		Moving the boundary condition},
   journal={J. Differential Equations},
   date={in press},
}
\end{biblist}
\bigskip

\end{document}